\documentclass{amsart}

\usepackage{appendix,amsfonts,amssymb,amsmath,enumerate,verbatim,mathtools,tikz,bm,mathrsfs,tikz-cd,hyperref,comment}
\hypersetup{
    colorlinks=true,
    linkcolor=blue,
    filecolor=magenta,      
    urlcolor=cyan,
}

\usepackage[utf8]{inputenc}

\keywords{complete intersections, thick subcategories, exterior algebra,  Koszul complex, DG algebra, DG module, support variety, duality, complexity}
\subjclass[2010]{13D09 (primary); 13D07, 13H10, 16E45 (secondary)}

\title[Duality and symmetry of complexity]{Duality and symmetry of  complexity over  complete intersections via exterior homology}

\author[Jian Liu]{Jian Liu}
\address{School of Mathematical Sciences, University of Science and Technology of China, Hefei 230026, Anhui, P.R. China. }
\email{liuj231@mail.ustc.edu.cn}
\thanks{The first author thanks the China Scholarship Council for  financial support to visit Srikanth Iyengar at the University of Utah.}

\author[Josh Pollitz]{Josh Pollitz}
\address{Department of Mathematics,
University of Utah, Salt Lake City, UT 84112, U.S.A.}
\email{pollitz@math.utah.edu}
\thanks{The second author was supported by the National Science Foundation under Grant No. 1840190.}

\renewcommand{\S}{{\mathcal{S}}}

\DeclareMathOperator{\h}{H}

\newcommand{\T}{\mathsf{T}}

\newcommand{\D}{\mathsf{D}}

\newcommand{\vp}{\varphi}

\pgfdeclarelayer{bg}    
\pgfsetlayers{bg,main} 
\newcommand{\x}{{\bm{x}}}

\newcommand{\del}{\partial}

\newcommand{\m}{\mathfrak{m}}

\newcommand{\p}{\mathfrak{p}}

\DeclareMathOperator{\id}{id}

\DeclareMathOperator{\cx}{cx}

\DeclareMathOperator{\Spec}{Spec}

\DeclareMathOperator{\Hom}{Hom}
\DeclareMathOperator{\Ext}{Ext}

\DeclareMathOperator{\V}{V}

\DeclareMathOperator{\thick}{\mathsf{thick}}
\DeclareMathOperator{\Tor}{Tor}
\DeclareMathOperator{\Kos}{Kos}

\DeclareMathOperator{\supp}{\mathsf{supp}}

\newcommand{\shift}{{\mathsf{\Sigma}}}
\DeclareMathOperator{\RHom}{\mathsf{RHom}}

\newcommand{\ot}{\otimes^{\mathsf{L}}}

\newcommand{\xra}{\xrightarrow}

\newtheorem{theorem}{Theorem}[section]

\newtheorem{proposition}[theorem]{Proposition}

\newtheorem{lemma}[theorem]{Lemma}
\newtheorem{corollary}[theorem]{Corollary}

\theoremstyle{definition}

\newtheorem{remark}[theorem]{Remark}

\newtheorem{chunk}[theorem]{}

\newtheorem*{ack}{Acknowledgements}

\newtheorem{Thm}{Theorem}

\begin{document}

\maketitle

\begin{abstract}
    We study  homological properties of a locally complete intersection ring by importing facts from homological algebra over  exterior algebras. One  application is showing that  the thick subcategories of the bounded derived category of a locally complete intersection ring are self-dual under Grothendieck duality. This was proved by Stevenson when the ring is a quotient of a regular ring modulo a regular sequence; we offer two independent proofs in the more general setting. Second, we use these techniques to supply new proofs that  complete intersections possess symmetry of complexity.
\end{abstract}

\section*{Introduction}

Homological algebra over complete intersections is profoundly linked to the homological algebra over exterior algebras.
This was clarified in  \cite{AI3} where Avramov and Iyengar established a process to obtain homological information over complete intersections from the corresponding results over graded Hopf algebras.  Their techniques provided new, easier proofs of many known  results over  complete intersections.

For example, the complexity of a module measures the polynomial rate of growth of its Betti numbers while the injective complexity measures  the polynomial rate of growth of the module's Bass numbers. Using the process described above, Avramov and Iyengar easily deduced that over  complete intersections the complexity of a module agrees with its injective complexity, and both of these values are bounded above by the complexity of the residue field; they also employ their methods to show the latter   is exactly  the codimension of the complete intersection.

In this article, we adopt techniques from  \cite{AI3} to acquire further information about complete intersections. For the rest of the introduction $R$ is a commutative noetherian ring. The first main result is framed in terms of the derived category of $R$, denoted $\D(R)$.

We let  $\D^f(R)$ denote the full subcategory   of $\D(R)$ consisting of those complexes of $R$-modules whose total homology is  finitely generated. It inherits the structure of a triangulated category from $\D(R)$.  Recently, there has been much interest in understanding  the structure of thick subcategories of $\D^f(R)$, see for example  \cite{ABIM,CI,DGI,Letz,Pol,St,Tak}. Our first main result is the following:
\begin{Thm}\label{t1}If $R$ is locally complete intersection, then
each thick subcategory of $\D^f(R)$ is self-dual under Grothendieck duality. That is, for any thick subcategory $\T$ of $\D^f(R)$ and object $M$ in $\T$,  $\RHom_R(M,R)$ is in $\T$, as well. 
\end{Thm}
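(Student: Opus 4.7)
The plan is to reduce the problem to the local case and then transport it to an exterior algebra, where Grothendieck duality becomes transparent via the Frobenius property.

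First, I would apply a local-to-global principle. For $M\in\D^f(R)$ the formation of $\RHom_R(-,R)$ commutes with localization, so it suffices to show $\RHom_{R_\p}(M_\p,R_\p)\in\T_\p$ for every $\p\in\Spec R$; this reduces matters to the case that $(R,\m,k)$ is a local complete intersection. By faithful flatness of completion we may further assume $R=Q/(\f)$ for a regular local ring $Q$ and a $Q$-regular sequence $\f=f_1,\dots,f_c$.

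Second, I would invoke the transport principle of \cite{AI3}. The Koszul DG algebra $E=\Kos^Q(\f)$ is quasi-isomorphic to $R$, and the derived fibre $\Lambda:=k\ot_R E$ is quasi-isomorphic to an exterior algebra on $c$ generators over $k$. Base change $\D^f(R)\to\D(\Lambda)$ along $R\leftarrow E\to\Lambda$ records the cohomological support that classifies thick subcategories. Since $\Lambda$ is Frobenius, the duality $\RHom_\Lambda(-,\Lambda)$ is transparent: up to a shift and Nakayama twist it is the identity on the relevant subcategory, and therefore preserves every thick subcategory, as well as the associated support in $\Proj k[\chi_1,\dots,\chi_c]$.

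Third, I would verify that the transport intertwines $\RHom_R(-,R)$ with $\RHom_\Lambda(-,\Lambda)$ up to shift and twist, or equivalently, that $\Ext^*_R(\RHom_R(M,R),k)$ and $\Ext^*_R(M,k)$ have the same support over the ring of cohomology operators. Thick subcategories of $\D^f(R)$ are detected by such supports, either directly or through the transported thick subcategories on the exterior side, and so equality of supports forces $\RHom_R(M,R)\in\T$.

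The main obstacle is this compatibility of dualities. Pinning down how $R$-linear duality interacts with the change of rings $R\leftarrow E\to\Lambda$ requires a careful DG-module calculation; for instance, one needs a natural isomorphism relating $\RHom_R(M,R)\ot_R\Lambda$ to a Nakayama twist of $\RHom_\Lambda(M\ot_R\Lambda,\Lambda)$. Once such a Koszul-duality-type identification is in place, self-duality of each thick subcategory follows formally from the transparency of duality on the Frobenius DG algebra $\Lambda$.
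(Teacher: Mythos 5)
Your overall route --- localize, transport to an exterior algebra, and exploit the Hopf/Frobenius structure of $\Lambda$ to handle duality --- is the same in spirit as the paper's second proof of Theorem \ref{cor1}, but there is a genuine gap in the assembly. After reducing to a local complete intersection $(R,\m,k)$ you work only with the exterior algebra attached to the closed point and assert that thick subcategories of $\D^f(R)$ are classified by the resulting supports over $k[\chi_1,\dots,\chi_c]$. That is false whenever $\dim R>0$: the functor $-\otimes_R K^R$ is not conservative on thick subcategories, so equality of supports over $\S$ only yields $\RHom_R(M,R)\otimes_R K^R\in\thick_{\D(R)}(M)$, not $\RHom_R(M,R)\in\thick_{\D(R)}(M)$. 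Concretely, $\V_R(R)\subseteq\V_R(k)$ always, yet $R\notin\thick_{\D(R)}(k)$ when $\dim R>0$, since the latter consists of complexes with finite-length homology. The correct detection statement (Theorem \ref{Th}, resting on the Benson--Iyengar--Krause principle \ref{lg}) quantifies over \emph{all} primes: one must prove $\RHom_R(M,R)_\p\otimes_{R_\p}K^{R_\p}\in\thick_{\D(R_\p)}\bigl(M_\p\otimes_{R_\p}K^{R_\p}\bigr)$ for every $\p$, i.e.\ the exterior-algebra argument has to be run at each prime and only then fed into the local-to-global principle; it cannot be a one-time reduction to a single local ring. Relatedly, passing to $\widehat{R}=Q/(\f)$ ``by faithful flatness'' is not automatic (descent of thick-subcategory membership along completion is a theorem of Letz, cf.\ Remark \ref{rs}); the paper sidesteps completion entirely by using the Koszul complex $K^R$ on $\m$, which is formal for any local complete intersection.

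The step you flag as the main obstacle --- that $\RHom_\Lambda(-,\Lambda)$ preserves thick subcategories --- is indeed the technical heart (Theorem \ref{closed}), and it is not quite ``the identity up to a shift and Nakayama twist.'' One has $\RHom_\Lambda(M,\Lambda)\simeq\shift^c\Hom_k(M,k)$ with the untwisted action $M^*(\id)$, whereas the Hopf-theoretic splitting of $M$ off of $M\otimes_k M^*(\sigma)\otimes_k M$ controls the antipode-twisted dual $M^*(\sigma)$; because the generators of $\Lambda$ sit in odd degree, these two duals differ by negating the differential and are not obviously isomorphic. The paper closes that discrepancy with a support computation (Lemma \ref{l:neg}). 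With these two repairs your outline becomes the paper's argument.
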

 Stevenson in \cite[4.11]{St} proved the result under the additional assumption that $R$ is a quotient of a regular ring modulo a regular sequence. One can also deduce Theorem \ref{t1} from \cite[4.11]{St} in conjunction with recent results of  Letz \cite[3.12 \& 4.5]{Letz}; details are provided in Remark \ref{rs}.

In this article, we present two proofs of Theorem \ref{t1} both of which rely  on a  local-to-global principle of Benson, Iyengar and Krause (see \ref{lg}) and the structure of thick subcategories in the derived category of an exterior algebra over a field (cf. \cite{CI}). The first proof uses the theory of cohomological support discussed in Section \ref{sCI}. Namely, we show that the containment of thick subcategories is encoded in the  support varieties of Avramov and Buchweitz defined in \cite{AB} (see Theorem \ref{Th} for a precise statement). 

The second proof makes direct use of the graded Hopf algebra structure of the exterior algebra to show that thick subcategories  over an exterior algebra on generators of homological degree one are fixed by Grothendieck duality (see Theorem \ref{closed}). 
 Furthermore, it is worth noting that  the both of the proofs of Theorem \ref{cor1}  do not rely on the full classification of thick subcategories in \cite[8.8]{St2}; making the proofs here simpler even in the case that $R$ is a quotient of a regular ring modulo a regular sequence. 

As a consequence of Theorem \ref{t1} we   obtain asmyptotic information over complete intersections. For example, we recover a result of Avramov and Buchweitz \cite[6.3]{AB} that says the eventual vanishing of Ext is equivalent to the eventual vanishing of Tor over locally complete intersections (cf. Corollary \ref{cor}). Furthermore, in the local case we can use Theorem \ref{t1} to show complexity is symmetric in $M$ and $N$;
recall   the complexity of a pair of objects $M$ and $N$ of $\D^f(R)$ is the polynomial rate of growth of the minimal number of generators of $\Ext_R^n(M,N)$ (see \ref{cx} for a precise definition).
\begin{Thm}\label{t2}
If $R$ is  complete intersection, then $\cx_R(M,N)=\cx_R(N,M)$ for each pair of objects $M$ and $N$ in $\D^f(R).$
\end{Thm}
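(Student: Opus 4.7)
The plan is to derive Theorem~\ref{t2} from Theorem~\ref{t1} together with Grothendieck duality. Write $(-)^\vee = \RHom_R(-, R)$. Since a complete intersection local ring is Gorenstein, $R$ itself (up to shift) serves as a dualizing complex, so $(-)^\vee$ is a contravariant self-equivalence of $\D^f(R)$. For any contravariant equivalence $F$ one has $\RHom_R(X, Y) \simeq \RHom_R(FY, FX)$; applied to $(-)^\vee$ and taking $n$-th cohomology this gives $\Ext^n_R(M, N) \cong \Ext^n_R(N^\vee, M^\vee)$ for every $n$, whence
\[
\cx_R(M, N) = \cx_R(N^\vee, M^\vee).
\]

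The second ingredient is that, in each variable, complexity is determined by the thick-closure of the argument. For fixed $N \in \D^f(R)$ and a nonnegative integer $c$, the subcategory
\[
\{X \in \D^f(R) : \cx_R(X, N) \leq c\}
\]
is closed under shifts and direct summands, and closure under cones follows from the long exact sequence of $\Ext$ together with subadditivity of the minimal-generator function $\mu_R$ on short exact sequences. Hence it is thick, and it follows that $\thick(M) = \thick(M')$ forces $\cx_R(M, N) = \cx_R(M', N)$. The analogous assertion in the second variable is proved in the same way.

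Finally, Theorem~\ref{t1} gives $X^\vee \in \thick(X)$ for every $X \in \D^f(R)$; applying the theorem to the thick subcategory $\thick(X^\vee)$ and using $X^{\vee\vee} \simeq X$ yields $X \in \thick(X^\vee)$, so $\thick(X) = \thick(X^\vee)$. Combining the duality isomorphism with thick-invariance in each variable produces
\[
\cx_R(M, N) = \cx_R(N^\vee, M^\vee) = \cx_R(N, M^\vee) = \cx_R(N, M).
\]
The only genuinely technical point is verifying thickness of the complexity-bounded subcategory; the remainder is a formal consequence of Gorenstein duality and the self-duality result of Theorem~\ref{t1}.
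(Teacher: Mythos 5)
Your overall route is the one the paper takes in its first proof of this theorem: combine the Gorenstein duality isomorphism $\Ext_R(M,N)\cong\Ext_R(N^\vee,M^\vee)$ with Theorem \ref{t1} (which gives $\thick_{\D(R)}(X)=\thick_{\D(R)}(X^\vee)$) and the fact that complexity depends only on the thick subcategories generated by its two arguments. The difference is that the paper imports this last fact from the literature (\cite[5.6]{AB}; see Remark \ref{r1}(2)), whereas you attempt to prove it, and that attempt has a genuine gap.

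The gap is in the claim that $\{X\in\D^f(R):\cx_R(X,N)\leq c\}$ is closed under cones. Given a triangle $X'\to X\to X''\to\shift X'$, the long exact sequence of $\Ext_R(-,N)$ produces a short exact sequence whose middle term is $\Ext_R^n(X,N)$, whose left-hand term is a quotient of $\Ext_R^n(X'',N)$, and whose right-hand term is the image of $\Ext_R^n(X,N)$ in $\Ext_R^n(X',N)$, i.e.\ a \emph{submodule} of $\Ext_R^n(X',N)$. Subadditivity of $\mu_R$ on short exact sequences is fine ($\mu(B)\leq\mu(A)+\mu(C)$ by right-exactness of $-\otimes_Rk$), but it then bounds $\mu_R(\Ext_R^n(X,N))$ by $\mu_R(\Ext_R^n(X'',N))$ plus the minimal number of generators of a submodule of $\Ext_R^n(X',N)$, and the latter is not controlled by $\mu_R(\Ext_R^n(X',N))$: already $\m^n\subseteq k[[x,y]]$ needs $n+1$ generators while the ambient module needs one. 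So the argument does not close, and the step is not ``the only genuinely technical point'' so much as the whole content of the lemma. A telling symptom is that your argument nowhere uses that $R$ is a complete intersection, yet the monotonicity of complexity along thick containment is a substantive consequence of that hypothesis: the known proofs go through the finite generation of $\Ext_R(M,N)$ over the ring of cohomology operators (Gulliksen) and the resulting support-variety description of complexity, which is how \cite[5.6]{AB} (and \cite[4.2.5 \& 4.2.9]{Po}) establish it. If you cite that result, as the paper does, the rest of your argument is correct and coincides with the paper's first proof.
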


 This was first proven by Avramov and Buchweitz \cite{AB}; an alternative proof was provided by the second author in \cite{Po}. In contrast, we give two new proofs of this result  in this paper; both of which use the homological properties over exterior algebras. The first proof deduces Theorem \ref{t2} from Theorem \ref{t1}, illustrating how the containment  of thick subcategories  and duality can provide asymptotic information.  The second proof directly links the complexity of a pair of objects $M$ and $N$  in $\D^f(R)$ with the complexity of a pair  of objects in the derived category of an exterior algebra over a field. This  proof fills in a missing piece of the work in \cite{AI3}; cf. \cite[6.9]{AI3} and the discussion in Remark \ref{rcx}.

\begin{ack}
Both authors are indebted to Srikanth Iyengar for  many helpful discussions, as well as suggesting the two authors collaborate because of their many common interests. We are also very happy to thank Benjamin Briggs for several useful comments on an earlier draft of this paper as well as numerous conversations that helped  clarify some  ideas in Section \ref{a1}. We also thank Janina Letz and Greg Stevenson for their comments on a preliminary draft of this work.
\end{ack}

\section{Background, Notation, and Terminology} Throughout this article $R$ will be a commutative noetherian ring.

 \subsection{(Locally) Complete Intersections} 
 Suppose $(R,\m,k)$ is local. Recall the \emph{embedding dimension} of $R$ is $\dim_k\m/\m^2$, the minimal number of generators for $\m$, and the \emph{codimension of R} is 
\[
\dim_k\m/\m^2-\dim R.
\]

A  local ring $(R,\m,k)$ is  \emph{complete intersection} if its $\m$-adic completion $\widehat{R}$ is isomorphic to $Q/I$ where $Q$ is a regular local ring and  $I$  is generated by  a $Q$-regular sequence. In fact, the presentation can be chosen so that $Q$ and $R$ have the same embedding dimension  and $I$ is generated by $c$ elements where $c$ is the codimension of $R$.

More generally,  $R$ is \emph{locally complete intersection}  provided that the local ring $R_\p$ is  complete intersection for each prime ideal $\p$ of $R$.

\subsection{Derived Category of a DG Algebra}

Let $A$ be a DG $R$-algebra. We briefly discuss the derived category of DG $A$-modules and set notation used throughout the rest of the article. See \cite[Section 3]{ABIM} or \cite[Chapter 6]{FHT} for more details. 

Let  $\D(A)$ denote the derived category of (left) DG $A$-modules. Recall that $\D(A)$ is a triangulated category with $\shift$ being the suspension functor; for each $X$ in $\D(A)$, $\shift X$ is the DG $A$-module given by $\shift X_i=X_{i-1}$, $a\cdot (\shift x)=(-1)^{|a|}ax$ and $\del^{\shift X}=-\del^X$. We let $\D^f(A)$ denote the full subcategory of $\D(A)$ consisting of those objects $X$ of $\D(A)$ such that $\h(X)$ is a finitely generated graded $\h(A)$-module.

Each DG $A$-module $X$ admits a semiprojective resolution. That is, there exists a surjective quasi-isomorphism $P\to M$ such that $\Hom_A(P,-)$ preserves surjective quasi-isomorphisms. 
 For any $Y$ in $\D(A)$, we set  \[\RHom_A(X,Y)\coloneqq\Hom_A(P,Y)\] where $P\to X$ is a semiprojective resolution and  \[\Ext_A(X,Y)\coloneqq\h\left(\RHom_A(X,Y)\right),\] which naturally inherits  a graded $\Ext_A(Y,Y)$-$\Ext_A(X,X)$-bimodule structure.

\subsection{Koszul Complexes} 
\label{kos} Background on Koszul complexes can be found in \cite[Section 1.6]{BH}. We recall the necessary facts here. 

For a list of elements $\x=x_1,\ldots, x_n$ in $R$, we set $\Kos^R(\x)$ to be the Koszul complex of $\x$ on $R$, which is regarded as a DG $R$-algebra in the usual way.  

When $R$ is local with maximal ideal $\m$, set $K^R$ to be the Koszul complex on a minimal generating set for $\m$. It is well-defined up to an isomorphism of DG $R$-algebras. 

Fix  a prime ideal  $\p$ of $R$ and  let $M$ be an object of $\D(R).$ We set 
$$M(\p)\coloneqq M_\p\otimes_{R_\p} K^{R_\p}$$   which is a DG $K^{R_\p}$-module.
Restricting scalars along the morphism of DG algebras $R_\p \to K^{R_\p}$ we may regard $M(\p)$ as an object of $\D(R_\p)$.

 \subsection{Koszul Complexes over  Complete Intersections} \label{koszulci}
 Let $(R,\m,k)$ be  complete intersection of codimension $c$ and  $\Lambda$  be the  exterior algebra over $k$ on  generators $e_1,\ldots, e_c$ of homological degree $1$.   Let $\mathsf{t}\colon \D(R)\to \D(K^{R})$ be the functor  $-\otimes_R K^{R}$.
 
 By  \cite[6.4]{AI3} there is a quasi-isomorphism of DG algebras $K^R\simeq  \Lambda$ that
   induces an equivalence of triangulated categories $\mathsf{j}\colon\D(K^R)\to  \D(\Lambda)$;  this restricts to an equivalence $ \D^f(K^R)\xrightarrow{\equiv}\D^f(\Lambda)$ that is compatible with Grothendieck duality
(see \cite[3.6]{ABIM} or \cite[2.5]{AI3}). Hence, when $R$ is complete intersection we have the following composition
\[\mathsf{jt}:\D^f(R)\to \D^f(K^R)\xra{\equiv}\D^f(\Lambda);\] this is the main bridge for  importing results over graded exterior algebras to  complete intersections.
Throughout the rest of the paper, $\mathsf{j}$ and $\mathsf{t}$ will always denote the functors introduced here. 

\subsection{Thick Subcategories}\label{t}
Let $A$ be a DG algebra and $\T$ be a full subcategory of $\D(A)$. We say $\T$
 is \emph{thick} if it is a triangulated subcategory that is closed under taking direct summands. For an object $M$ of $\D(A)$, we let $\thick_{\D(A)} (M)$ denote the smallest thick subcategory of $\D(A)$ containing $M$.  This can be realized as the intersection of all thick subcategories  of $\D(A)$ containing $M$; alternatively, this has an inductive construction discussed in \cite[2.2.4]{ABIM}. 

\subsection{Local-to-Global Principle}\label{lg}

The main results in the present paper rely  on the following \emph{local-to-global principle} of Benson, Iyengar and Krause (see by \cite[5.10]{BIK2}).
Namely,  for objects $M$ and $N$ of $\D^f(R)$, $M$ is in $\thick_{\D(R)}(N)$ if and only if $M(\p)$ is in $\thick_{\D(R_\p)}\left(N_\p\right)$ for each prime ideal $\p$ of $R$.
As $N(\p)$ is an object of $\thick_{\D(R_\p)}\left( N_\p\right)$, we  restate the local-to-global principle as:
\[M\text{ is in }\thick_{\D(R)} (N) \iff   M(\p)\text{ is in }\thick_{\D(R_\p)}\left(N(\p)\right)\]
for each  prime ideal $\p$ of $R.$

\subsection{Homogeneous Support}
Let $\S$ be a commutative noetherian graded ring. We let $\Spec^* \S$ denote the homogeneous spectrum of $\S$. That is, $\Spec^*\S$ consists of the homogeneous prime ideals of $\S$. For a graded $\S$-module $X$ and $\p\in \Spec^*\S$, $X_\p$  denotes the homogeneous localization of $X$ at $\p$. The homogeneous support of $X$ is $$\supp_\S X=\{\p\in \Spec^*\S: X_\p\neq 0\}.$$


\section{Cohomological Support Varieties}
\label{sCI}

Throughout this section we fix the following notation. 
Let $(R,\m,k)$ be  complete intersection with  codimension $c$ and embedding dimension $\nu.$
Let $\S$ denote the graded $k$-algebra $k[\chi_1,\ldots,\chi_c]$  where each $\chi_i$ has homological degree $-2$.
 We set  $\Lambda$ to be the  exterior algebra over $k$ on  generators $e_1,\ldots, e_c$ of homological degree $1$. Finally, let $\mathsf{j}$ and $\mathsf{t}$ be the functors from \ref{koszulci}.

\begin{chunk}\label{act}
In \cite[Theorem 5]{Sj}, Sj{\"o}din described the graded $k$-algebra structure of $\Ext_R(k,k)$. It contains $\S$ as a polynomial subalgebra in  such a way that 
$$
\Ext_R(k,k)\cong \S\otimes_k \bigwedge \shift^{-1}k^{\nu}
$$ as graded $\S$-modules; see also \cite[10.2.3]{IFR} for more details. Thus,
 $\S$ acts on $\Ext_R(k,M)$  through the $\Ext_R(k,k)$-action for each  $M$  in $\D(R)$. We define  \emph{the cohomological support of $M$ over $R$} to be $$\V_R(M)\coloneqq\supp_\S \Ext_R(k,M).$$
\end{chunk}

\begin{chunk}\label{civar}
 By
 \cite[5.1]{AI3} (see also  \cite[7.4]{ABIM}), there is an isomorphism of graded $k$-algebras  $$
\Ext_\Lambda(k,k)\cong \S.$$
 For any $X$ in $\D(\Lambda)$, we define  the \emph{cohomological support of $X$  over $\Lambda$} to be $$\V_\Lambda(X)\coloneqq \supp_{\S}\Ext_\Lambda(k,X).$$
These varieties can detect  the containment of thick subcategories in $\D^f(\Lambda).$ Namely, in  \cite[4.4]{CI}, Carlson and Iyengar showed 
 \[
 X\text{ is in } \thick_{\D(\Lambda)} (Y)\iff \V_\Lambda(X)\subseteq \V_\Lambda(Y) 
 \]  for any pair of objects $X$ and $Y$ in $\D^f(\Lambda)$. This  essentially follows from  the celebrated theorem of Hopkins \cite[11]{H} and Neeman \cite[1.2]{N} (see  also  \cite[3.2]{CI} for the version needed) and a BGG correspondence (cf. \cite[7.4]{ABIM}).
\end{chunk}

There is a way to relate the supports defined over $R$ and $\Lambda.$ This was first noticed  in the case that $R$ is artinain \cite[5.11]{CI}; however, the same proof works without any restriction on the Krull-dimension of $R$ and so we sketch it for the convenience of the reader in the following remark and lemma. 
\begin{chunk}\label{injection} First, there is a canonical injective map  $\eta\colon \Ext_{\Lambda}(k,k)\to \Ext_R(k,k)$ of graded  $k$-algebras that can be factored as $$\Ext_{\Lambda}(k,k)\xra{\cong} \Ext_{K^R}(k,k)\hookrightarrow \Ext_R(k,k)$$ where the isomorphism is induced by the inverse of the equivalence $\mathsf{j}$ from \ref{koszulci}.   Moreover, the  image of $\eta$ is exactly the polynomial subalgebra $\S$ of $\Ext_R(k,k)$ mentioned in \ref{act}. Therefore, the   cohomological supports over $R$ and those over $\Lambda$ can naturally be thought of as subsets of the same  $\Spec^*\S$. Moreover, we have the following connection.
\end{chunk}

\begin{lemma}\label{supp2}
For each  $M$ in  $\D(R)$, 
$\V_R(M)=\V_R(\mathsf{t} M)=\V_\Lambda(\mathsf{jt} M).$
\end{lemma}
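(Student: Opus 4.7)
The plan is to prove the two equalities separately: the first will follow from the projection formula for the perfect complex $K^R$, while the second will pass through the equivalence $\mathsf{j}$ together with a thick-subcategory comparison in $\D^f(\Lambda)$.

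For $\V_R(M)=\V_R(\mathsf{t} M)$, I would apply the projection formula: since $K^R$ is a bounded complex of finitely generated free $R$-modules,
$$\RHom_R(k,M\ot_R K^R)\cong \RHom_R(k,M)\ot_R K^R$$
in $\D(R)$. Because $\Ext_R(k,M)$ is a graded $k$-vector space, $R$ acts on it through $R\to k$, so after taking homology the right-hand side simplifies to $\Ext_R(k,M)\otimes_k \bigwedge k^\nu$, using that $K^R/\m K^R=\bigwedge k^\nu$. Tensoring with a finite-dimensional $k$-vector space preserves $\S$-support, which gives the desired equality.

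For $\V_R(\mathsf{t} M)=\V_\Lambda(\mathsf{jt} M)$, I would combine two $\S$-linear identifications. The restriction/extension-of-scalars adjunction along $R\to K^R$ yields
$$\Ext_R(k,\mathsf{t} M)\cong \Ext_{K^R}(k\ot_R K^R,\mathsf{t} M),$$
and the equivalence $\mathsf{j}$ provides
$$\Ext_\Lambda(k,\mathsf{jt} M)\cong \Ext_{K^R}(k,\mathsf{t} M).$$
It therefore suffices to show that $k$ and $k\ot_R K^R=\bigwedge k^\nu$ generate the same thick subcategory of $\D^f(K^R)$, or equivalently, of $\D^f(\Lambda)$ via $\mathsf{j}$. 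The crucial observation is that the $\Lambda=\h(K^R)$-action on $\mathsf{j}(\bigwedge k^\nu)$ is trivial in positive degrees: any positive-degree cycle in $K^R$ must have all coefficients in $\m$ by the minimality of the chosen generators $x_1,\ldots,x_\nu$, so such a cycle annihilates $k\ot_R K^R$. Thus $\mathsf{j}(\bigwedge k^\nu)$ splits as a finite direct sum of shifts of $k$ in $\D^f(\Lambda)$, immediately yielding the desired equality of thick subcategories. Applying $\Ext_\Lambda(-,\mathsf{jt} M)$ then transfers this equality into equality of $\S$-supports.

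The main obstacle will be the bookkeeping required to verify that the two identifications in the second step are $\S$-linear -- i.e.\ that the $\S$-action inherited from $\Ext_R(k,k)$ through the inclusion $\S\subset\Ext_R(k,k)$ matches, via the injection $\eta$ of \ref{injection}, the canonical $\S=\Ext_\Lambda(k,k)$-action on $\Ext_\Lambda(k,\mathsf{jt} M)$. Once this compatibility is in place the argument assembles cleanly from the projection formula, the triviality of the $\Lambda^+$-action, and the functoriality of $\Ext_\Lambda(-,\mathsf{jt} M)$.
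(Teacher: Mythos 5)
Your overall route is the one the paper takes: the first equality via the projection formula for the perfect complex $K^R$ (this is the paper's isomorphism $\Ext_R(k,\mathsf{t}M)\cong\bigoplus_{i}\shift^{i}\Ext_R(k,M)^{\binom{\nu}{i}}$), and the second via adjunction along $R\to K^R$, transport through the equivalence $\mathsf{j}$, and a decomposition of $\mathsf{t}k=k\otimes_RK^R$ into shifts of $k$. The $\S$-linearity you defer to ``bookkeeping'' is exactly what \ref{injection} is set up to supply, so that concern is manageable.

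There is, however, a genuine gap in your justification of the decomposition. You correctly observe that every positive-degree cycle of $K^R$ lies in $\m K^R$ and hence annihilates $k\otimes_RK^R$, so that $\Lambda_{\geq 1}$ acts trivially on $\h(\mathsf{jt}k)$; but the inference ``thus $\mathsf{jt}k$ splits as a finite direct sum of shifts of $k$ in $\D^f(\Lambda)$'' does not follow. A DG $\Lambda$-module whose homology is a trivial $\Lambda$-module need not be formal: since $\Hom_{\D(\Lambda)}(k,\shift^{2j}k)\cong\S_{-2j}\neq 0$ for all $j\geq 0$, there is room for nontrivial attaching maps between the homology layers. Concretely, the cone $C$ of $\chi_1\colon k\to\shift^{2}k$ has $\h(C)\cong\shift^{2}k\oplus\shift k$ with trivial $\Lambda$-action, yet $C\not\simeq\shift^{2}k\oplus\shift k$ in $\D(\Lambda)$ (one checks $\Hom_{\D(\Lambda)}(C,\shift^2k)=0$ from the defining triangle, whereas the split module admits the identity on its top summand); note the homology here even sits in adjacent nonnegative degrees, so the failure cannot be ruled out by degree considerations in your situation. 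The splitting $\mathsf{t}k\simeq\bigoplus_{i}\shift^{i}k^{\binom{\nu}{i}}$ in $\D(K^R)$ is true, but it is precisely the nontrivial input the paper invokes rather than derives; your observation about cycles establishes only the necessary condition on homology. Once that input is supplied, the rest of your outline assembles correctly.
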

\begin{proof} First, consider the isomorphisms of graded $\S$-modules
\begin{align*}
\Ext_R(k,\mathsf{t}M)&\cong \Ext_{K^R}( \mathsf{t}k, \mathsf{t} M) \\
&\cong  \Ext_\Lambda(\mathsf{jt} k, \mathsf{jt}M) \\
&\cong \bigoplus_{i=0}^{\nu}\shift^{-i}\Ext_\Lambda(k,\mathsf{jt}M)^{{\nu \choose i}}
\end{align*}
where the third isomorphism holds because $\mathsf{t} k\cong\bigoplus_{i=0}^{\nu}\shift^{-i}k^{{\nu \choose i}} $ and $\mathsf {j}(k)\simeq k$, see \cite[3.9]{ABIM} for the latter.
Also, we have the isomorphism of graded $\S$-modules
$$\Ext_R(k,\mathsf{t}M)\cong \bigoplus_{i=0}^{\nu}\shift^i\Ext_R(k,M)^{{\nu \choose i}}.$$ Therefore, the isomorphisms of $\S$-modules show $$\V_R(M)=\V_R( \mathsf{t} M)=\V_\Lambda(\mathsf{jt}M).\qedhere$$ 
\end{proof}

\begin{proposition}\label{prop}
For  $M$ and $N$ in $\D^f(R)$, 
\[\mathsf{t} M\text{ is in }\thick_{\D(R)}(\mathsf{t} N)\iff \V_R(M)\subseteq \V_R(N).\]
\end{proposition}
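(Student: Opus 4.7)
The plan is to bridge the three ambient categories $\D^f(R)$, $\D^f(K^R)$, and $\D^f(\Lambda)$, reducing everything at the exterior-algebra end to the Carlson--Iyengar theorem (\ref{civar}) and at the $R$-end to a general ``thickness vs.\ support'' principle, then glued together by Lemma \ref{supp2}.

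For the forward direction, I would argue that the subcategory
\[
\T_N \coloneqq \{X\in \D^f(R) : \V_R(X)\subseteq \V_R(\mathsf{t} N)\}
\]
is a thick subcategory of $\D^f(R)$. Closure under $\shift$ is immediate; closure under direct summands follows from additivity of $\Ext_R(k,-)$; closure under triangles uses the long exact sequence of $\Ext_R(k,-)$ together with the fact that for a triangle $X_1\to X_2\to X_3\to$ one has $\V_R(X_2)\subseteq \V_R(X_1)\cup \V_R(X_3)$. Since $\mathsf{t} N\in \T_N$, this forces $\thick_{\D(R)}(\mathsf{t} N)\subseteq \T_N$, so from $\mathsf{t} M\in \thick_{\D(R)}(\mathsf{t} N)$ we obtain $\V_R(\mathsf{t} M)\subseteq \V_R(\mathsf{t} N)$. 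Lemma \ref{supp2} then yields $\V_R(M)\subseteq \V_R(N)$.

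For the reverse direction, I would transport the support hypothesis to $\Lambda$ via Lemma \ref{supp2}: if $\V_R(M)\subseteq \V_R(N)$, then $\V_\Lambda(\mathsf{jt} M)\subseteq \V_\Lambda(\mathsf{jt} N)$. The Carlson--Iyengar theorem recalled in \ref{civar} gives $\mathsf{jt} M\in \thick_{\D(\Lambda)}(\mathsf{jt} N)$. Because $\mathsf{j}\colon \D^f(K^R)\xra{\equiv}\D^f(\Lambda)$ is an equivalence of triangulated categories (see \ref{koszulci}), this pulls back to $\mathsf{t} M\in \thick_{\D(K^R)}(\mathsf{t} N)$. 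Finally, restriction of scalars along $R\to K^R$ is an exact functor $\D(K^R)\to \D(R)$ and therefore sends thick subcategories into thick subcategories, giving $\mathsf{t} M\in \thick_{\D(R)}(\mathsf{t} N)$.

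No step looks particularly difficult: the forward direction is a standard support-variety argument once one has Lemma \ref{supp2}, and the reverse direction is a formal chain of functorial translations. The only mildly delicate point to get right is the compatibility of the thick-subcategory notion under restriction of scalars $\D(K^R)\to \D(R)$ in the last step, but this is automatic since the restriction is a triangulated functor preserving direct summands.
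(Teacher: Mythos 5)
Your proposal is correct and follows essentially the same route as the paper: the reverse direction is identical (transport via Lemma \ref{supp2}, apply the Carlson--Iyengar theorem from \ref{civar}, pull back along the equivalence $\mathsf{j}$, then restrict scalars along $R\to K^R$), and your forward direction merely spells out the standard ``support is constant on thick subcategories'' argument that the paper compresses into ``it follows easily from the first equality in Lemma \ref{supp2}.''
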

\begin{proof} The forward direction is trivial from the first equality in Lemma \ref{supp2}. 

Conversely, assume $\V_R(M)\subseteq \V_R(N)$.
Using Lemma \ref{supp2}, this reads as  \[\V_\Lambda(\mathsf{jt} M)\subseteq \V_\Lambda(\mathsf{jt} N).\] Thus,  \ref{civar} implies that  $\mathsf{jt} M$ is an object of   $\thick_{\D(\Lambda)}  ( \mathsf{jt} N).$ As $\mathsf{j}$ is an equivalence we conclude that $\mathsf{t} M$ is an object of $\thick_{\D(K^R)}(\mathsf{t} N)$. The result follows by restricting scalars along the morphism of DG $R$-algebras $R\to K^R$. 
\end{proof}

We end this section with the following technical lemma which will be put to use in Section \ref{a1}. Note that since  $\Lambda$ has trivial differential, for each DG $\Lambda$-module $X$ we can negate differential of $X$ to obtain a DG $\Lambda$-module. Namely, let $X'$ denote the DG $\Lambda$-module whose underlying graded $\Lambda$-module is $X$ and its differential is $\del^{X'}\coloneqq -\del^X.$ When $\Lambda$ is concentrated in even degrees, $X\cong X'$ as DG $\Lambda$-modules (cf. \ref{twist}). However, as the generators of $\Lambda$ have degree 1 we do not know whether these are isomorphic. Instead, we show they have the same cohomological support, and hence, generate the same thick subcategory. 
\begin{lemma}\label{l:neg}
If $X$ is in $\D^f(\Lambda)$, then $\V_\Lambda(X)=\V_\Lambda(X').$ Moreover, we have the following equality of thick subcategories:
\[
\thick_{\D(\Lambda)}(X)=\thick_{\D(\Lambda)}(X').
\]
\end{lemma}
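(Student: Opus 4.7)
The plan is to deduce the thick subcategory equality from the support equality $\V_\Lambda(X)=\V_\Lambda(X')$ via \ref{civar}, so the substance of the argument lies in showing that the two cohomological supports coincide. I would achieve this by exhibiting a DG algebra automorphism of $\Lambda$ that relates $X$ to $X'$ by restriction of scalars, and observing that the induced action on $\S$ is trivial.

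Concretely, define $\phi\colon \Lambda \to \Lambda$ by $\phi(a)=(-1)^{|a|}a$ on homogeneous elements. Since $\Lambda$ has zero differential and $(-1)^{|a|+|b|}=(-1)^{|a|}(-1)^{|b|}$, this is a DG algebra automorphism of order two; restriction of scalars along $\phi$ produces a triangle self-equivalence $\phi^*\colon \D(\Lambda)\to \D(\Lambda)$. I claim that the sign map $\psi\colon X\to \phi^*(X')$, $\psi(x)=(-1)^{|x|}x$, is an isomorphism of DG $\Lambda$-modules; this is a short Koszul sign verification:
\[
\del^{\phi^*(X')}\psi(x) = -\del^X\bigl((-1)^{|x|}x\bigr) = (-1)^{|x|-1}\del^X x = \psi(\del^X x),
\]
\[
a\cdot_{\phi^*}\psi(x) = (-1)^{|a|}a \cdot (-1)^{|x|}x = (-1)^{|a|+|x|}ax = \psi(ax).
\]
Hence $X \cong \phi^*(X')$ in $\D(\Lambda)$.

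Next, I would verify that $\phi^*$ acts as the identity on $\S = \Ext_\Lambda(k,k)$. Since $\phi$ fixes the residue field, $\phi^*(k)=k$ canonically, and any $\Lambda$-linear map $f\colon A\to B$ remains $\Lambda$-linear when re-interpreted as a map $\phi^*(A)\to \phi^*(B)$. Consequently $\phi^*$ preserves the underlying map of every morphism in $\D(\Lambda)$, and in particular the induced endomorphism of $\S$ is the identity. The chain of functorial isomorphisms
\[
\Ext_\Lambda(k,X') \xrightarrow{\cong} \Ext_\Lambda(k,\phi^*(X')) \xrightarrow{\cong} \Ext_\Lambda(k,X)
\]
is therefore $\S$-linear, so $\V_\Lambda(X)=\V_\Lambda(X')$.

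The second statement of the lemma is then immediate: by \ref{civar}, the inclusion $X\in\thick_{\D(\Lambda)}(X')$ is equivalent to $\V_\Lambda(X)\subseteq \V_\Lambda(X')$, and the reverse inclusion follows similarly. The main obstacle is the Koszul sign bookkeeping in checking that $\psi$ is a morphism of DG $\Lambda$-modules; the remaining steps are formal consequences of $\phi^*$ being a triangle equivalence that fixes $k$.
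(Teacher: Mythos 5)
Your first step is fine: $\phi(a)=(-1)^{|a|}a$ is indeed a DG algebra automorphism (it is the antipode $\sigma$ from \ref{dual}), and the sign map $\psi\colon X\to\phi^*(X')$ is an isomorphism of DG $\Lambda$-modules; this is essentially the paper's \ref{twist} combined with Proposition \ref{dualiso}. The gap is in the claim that $\phi^*$ induces the identity on $\S=\Ext_\Lambda(k,k)$. The argument ``$\phi^*$ preserves the underlying map of every morphism'' overlooks that a class in $\Ext^n_\Lambda(k,k)$ is a roof through a semiprojective resolution $F\to k$, and $\phi^*$ replaces $F$ by $\phi^*F$; re-identifying $\phi^*F$ with $F$ over $k$ forces a nontrivial sign. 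Concretely, for $\Lambda=k\langle e\rangle$ with resolution $F=\Lambda\otimes_k k[y]$, $\del(1\otimes y^n)=e\otimes y^{n-1}$, the unique-up-to-homotopy lift $F\to\phi^*F$ of $\id_k$ must send $1\otimes y^n\mapsto(-1)^n\otimes y^n$, so $\phi^*$ acts on $\Ext^{2n}_\Lambda(k,k)$ by $(-1)^n$; in general the induced automorphism is $\theta\colon\chi_i\mapsto-\chi_i$, not the identity. So as written, your chain of isomorphisms is only $\theta$-semilinear, not $\S$-linear.

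The gap is reparable with one more observation: $\theta$ acts on the degree-$d$ component of $\S$ by $(-1)^{d}$, hence fixes every homogeneous ideal and in particular every homogeneous prime; a $\theta$-semilinear isomorphism therefore still preserves homogeneous support, and $\V_\Lambda(X)=\V_\Lambda(X')$ follows, after which \ref{civar} gives the thick subcategory equality exactly as you say. Note that the paper avoids this bookkeeping entirely by a different device: it takes the $\S$-equivariant semiprojective resolution $F\to k$ of \cite[4.2]{CI}, observes that $F'\to k$ is again such a resolution, and checks that $\alpha\mapsto(-1)^{|\alpha|}\alpha$ is an isomorphism of DG $\S$-modules $\Hom_\Lambda(F,X)\to\Hom_\Lambda(F',X')$ precisely because $\S$ is concentrated in even degrees. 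You should either adopt that route or correctly identify $\theta$ and add the argument that it acts trivially on $\Spec^*\S$.
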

\begin{proof}
By \cite[4.2]{CI}, there exists a  semiprojective resolution $F\xra{\simeq} k$ over $\Lambda$ such that $F$ admits a DG $\S$-module structure compatible with the $\S$-action on $\Ext_R(k,Y)$ for any $Y$ in $\D(\Lambda).$ As $k$ has trivial differential, the same is true of the  semiprojective DG $\Lambda$-resolution equipped with a DG $\S$-module structure $F'\xra{\simeq} k$.

Define $\Phi: \Hom_\Lambda(F,X)\to \Hom_\Lambda(F',X')$ given by \[
\alpha\mapsto (-1)^{|\alpha|}\alpha.
\] As $\S$ is concentrated in even degrees this is an isomorphism of DG $\S$-modules. Therefore, $\h(\Phi)$ establishes the following isomorphism of graded $\S$-modules
\[
\Ext_\Lambda(k,X)\cong \Ext_\Lambda(k,X');
\]  so $X$ and $X'$ have the same cohomological support. The equality of thick subcategories now follows from \ref{civar}. 
\end{proof}


\section{Duality of Thick Subcategories  via Support}
\label{sd2}

In this section we give the first proof of our result on the duality of thick subcategories over  locally complete intersections (see Theorem \ref{cor1}). The  main idea behind it is that the theory of cohomological supports, discussed in  Section \ref{sCI}, both detects  containment of thick subcategories and is unaffected by duality. The first theorem addresses the former point while \ref{l2}  the latter. 
\begin{theorem}\label{Th}
Let  $R$ be locally complete intersection. For   $M, N$   in $\D^f(R)$,   \[M\text{ is in } \thick_{\D(R)}(N)\iff  
    \V_{R_\p}(M_\p)\subseteq \V_{R_\p}(N_\p)\] for each prime ideal $\p$ of $R$.
\end{theorem}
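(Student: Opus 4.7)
The plan is to reduce the global statement to the local complete intersection case and then invoke Proposition \ref{prop} at each stalk. Both ingredients are already in hand: the Benson--Iyengar--Krause local-to-global principle (\ref{lg}) and the support-detects-thickness statement over a single complete intersection (\ref{prop}).

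First, I would rewrite the left-hand side using \ref{lg}: the containment $M \in \thick_{\D(R)}(N)$ is equivalent to $M(\p) \in \thick_{\D(R_\p)}(N(\p))$ for every prime $\p$ of $R$. Since $M, N \in \D^f(R)$, their localizations $M_\p$ and $N_\p$ lie in $\D^f(R_\p)$, and by the definition in \ref{kos} we have $M(\p) = \mathsf{t}(M_\p)$ and $N(\p) = \mathsf{t}(N_\p)$ where now $\mathsf{t}$ denotes the functor $-\otimes_{R_\p} K^{R_\p}$ of \ref{koszulci} applied over the local ring $R_\p$. After restricting scalars along $R_\p \to K^{R_\p}$, the condition becomes
\[
\mathsf{t}(M_\p) \in \thick_{\D(R_\p)}\bigl(\mathsf{t}(N_\p)\bigr) \quad \text{for every prime ideal } \p \text{ of } R.
\]

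Now I would apply Proposition \ref{prop} at each $R_\p$: since $R$ is locally complete intersection, $R_\p$ is complete intersection, so that proposition yields
\[
\mathsf{t}(M_\p) \in \thick_{\D(R_\p)}\bigl(\mathsf{t}(N_\p)\bigr) \iff \V_{R_\p}(M_\p) \subseteq \V_{R_\p}(N_\p).
\]
Chaining these equivalences gives precisely the statement of the theorem.

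Provided one is careful to identify $M(\p)$ with $\mathsf{t}(M_\p)$ in the sense of the local ring $R_\p$, there is no real obstacle here; the result is essentially a packaging of \ref{lg} with Proposition \ref{prop}. If anything, the only point worth flagging is that the equivalence used in Proposition \ref{prop} takes place over $R_\p$ via its own Koszul complex $K^{R_\p}$ and exterior algebra, so the supports $\V_{R_\p}(-)$ are computed stalk-by-stalk rather than globally; this is consistent with the statement of the theorem, which quantifies over all primes separately.
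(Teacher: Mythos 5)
Your proposal is correct and is essentially the paper's own argument: both reduce to the local case via the local-to-global principle of \ref{lg} and then apply Proposition \ref{prop} over each $R_\p$, using the identification $M(\p)=\mathsf{t}(M_\p)$. The only cosmetic difference is that the paper handles the forward implication directly from localization (without invoking \ref{lg}), whereas you run the full chain of equivalences in both directions; either is fine.
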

\begin{proof}
First, assume $M$ is an object of $\thick_{\D(R)}N$. Hence,   $M_\p$ is an object of  $\thick_{\D(R_\p)}N_\p$ and so it follows easily that  $\V_{R_\p}(M_\p)\subseteq \V_{R_\p}(N_\p)$ for $\p\in \Spec R.$

Conversely, suppose  $\V_{R_\p}(M_\p)\subseteq \V_{R_\p}(N_\p)$ for each $\p\in \Spec R.$  By Proposition \ref{prop}, $M(\p)$ is in $\thick_{\D(R_\p)} \left(N(\p)\right)$ for each prime ideal $\p$ of $R$. 
 Finally, we apply \ref{lg} to conclude that $M$ is in $\thick_{\D(R)}(N).$
 \end{proof}

\begin{chunk}\label{l2} It is well-known  that  cohomological  support over complete intersections  is closed under duality. That is, 
  \[
\V_R(M)=\V_R(\RHom_R(M,R))\] for each $M$  in $\D^f(R)$,
This was shown for closed points of $\Spec^*\S$  in \cite[3.3]{AB}, and  the general setting was shown in  \cite[4.1.5]{Po}.  Alternatively, a new proof is  obtained in the present work by combining  Lemma \ref{supp2} and  Theorem \ref{closed}; this proof sticks with the theme of establishing results for complete intersections by passing to an exterior algebra. 
\end{chunk}

\begin{theorem}\label{cor1}
If $R$ is locally complete intersection, then every thick subcategory of $\D^f(R)$ is closed under $\RHom_R(-,R)$. In particular, for each $M$ in $\D^f(R)$
\[
\thick_{\D(R)}(M)=\thick_{\D(R)}(\RHom_R(M,R)).
\]
\end{theorem}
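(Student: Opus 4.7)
The plan is to reduce the statement to the two tools just assembled, Theorem \ref{Th} and \ref{l2}, together with the fact that for locally complete intersection $R$ both $\RHom_R(-,R)$ and homogeneous cohomological support behave well under localization. The second assertion, that $\thick_{\D(R)}(M)=\thick_{\D(R)}(\RHom_R(M,R))$, immediately implies the first: any thick subcategory $\T$ containing $M$ contains $\thick_{\D(R)}(M)$, and thus $\RHom_R(M,R)$. So I would focus on proving the equality of thick subcategories.

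First, I would check that $\RHom_R(M,R)$ is an object of $\D^f(R)$, so that Theorem \ref{Th} applies to it. Since $R$ is locally complete intersection it is locally Gorenstein, and for such rings $\RHom_R(-,R)$ preserves the subcategory of complexes with finitely generated total cohomology (after checking boundedness at each prime). Next, for any prime $\p$ of $R$, I would invoke the standard compatibility
\[
\RHom_R(M,R)_\p \;\simeq\; \RHom_{R_\p}(M_\p,R_\p)
\]
which holds because $M$ is represented by a bounded-above complex of finitely generated projectives locally.

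Now I would combine these facts with Theorem \ref{Th}. To show $\RHom_R(M,R) \in \thick_{\D(R)}(M)$, by Theorem \ref{Th} it suffices to verify
\[
\V_{R_\p}\bigl(\RHom_{R_\p}(M_\p,R_\p)\bigr) \;\subseteq\; \V_{R_\p}(M_\p)
\]
for each $\p \in \Spec R$. Since $R_\p$ is complete intersection, \ref{l2} provides the stronger equality of supports, so this inclusion is trivial. Running the same argument with $M$ and $\RHom_R(M,R)$ interchanged (using the biduality $\RHom_{R_\p}(\RHom_{R_\p}(M_\p,R_\p),R_\p)\simeq M_\p$ locally, which holds since $R_\p$ is Gorenstein of finite Krull dimension) yields the reverse containment, hence equality of the two thick subcategories.

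I expect no serious obstacle: Theorem \ref{Th} reduces the problem to a statement about supports at each prime, \ref{l2} takes care of the key invariance of support under duality, and the rest is the routine verification that $\RHom$ and cohomological support commute with localization in the locally complete intersection setting. The only mildly technical point is confirming that $\RHom_R(M,R) \in \D^f(R)$, which relies on locally complete intersection rings being locally Gorenstein so that dualizing with respect to $R$ behaves well on $\D^f$.
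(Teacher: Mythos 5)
Your proposal is correct and is essentially the paper's first proof: localize, use the compatibility $\RHom_R(M,R)_\p\simeq\RHom_{R_\p}(M_\p,R_\p)$, invoke \ref{l2} for the equality of supports at each prime, and conclude via Theorem \ref{Th}. The only superfluous step is the appeal to biduality for the reverse containment --- since \ref{l2} gives an \emph{equality} of supports and Theorem \ref{Th} is an if-and-only-if, both inclusions follow at once.
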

\begin{proof}[First proof of Theorem \ref{cor1}] 
As $M$ is an object of  $\D^f(R)$, there is a natural isomorphism
$$
\RHom_R(M,R)_\p\cong \RHom_{R_\p}(M_\p,R_\p)
$$
for each prime ideal $\p$ of $R$. So   \ref{l2} shows
$$
\V_{R_\p}(M_\p)=\V_{R_\p}(\RHom_{R_\p}(M_\p,R_\p))=\V_{R_\p}(\RHom_R(M,R)_\p)
$$ for each prime ideal $\p$ of $R$.
Now we obtain \[\thick_{\D(R)}(M)=\thick_{\D(R)}(\RHom_R(M,R))\]as an immediate consequence of Theorem \ref{Th}.
\end{proof}

\begin{remark}\label{rs}
Theorem \ref{cor1} can also be proved by combining results of Stevenson and  Letz (see \cite[4.11]{St} and \cite[3.12 \& 4.5]{Letz}, respectively).  Stevenson used his classification of  thick subcategories of the singularity category of a regular ring modulo a regular sequence in \cite[8.8]{St2}  to show Theorem \ref{cor1} holds  for such rings. Letz showed for $M$ and $N$ in $\D^f(R)$, \[
M\text{ is in }\thick_{\D(R)}(N)\iff M\otimes \widehat{R_\p}\text{ is in }\thick_{\D(\widehat{R_\p})} (N\otimes_R \widehat{R_\p})\] for each prime ideal $\p$ of $R$ where $\widehat{R_\p}$ is the  $\p R_\p$-adic completion of $R_\p.$  So their work, indeed, offers a different argument for Theorem \ref{cor1}.  \end{remark}

 In Section \ref{a1}, we  give our second proof of Theorem \ref{cor1}. This requires an analysis of duality over a graded exterior algebra which is discussed there. We end this section with the following application  that recovers a theorem of Avramov and Buchweitz \cite[6.3]{AB}. In Section \ref{s:complexity}, we strengthen the equivalence of (1) and (2) in Corollary \ref{cor} when $R$ is further assumed to be local.
\begin{corollary}\label{cor}
Let $R$ be locally complete intersection. For $M$ and $N$ in $\D^f(R)$, the following are equivalent:
\begin{enumerate}
    \item $\Ext_R^i(M,N)=0$ for all $i\gg 0$;
    \item$\Ext_R^i(N,M)=0$ for all $i\gg 0$;
    \item$\Tor^R_i(M,N)=0$ for all $i\gg 0$.
\end{enumerate}
\end{corollary}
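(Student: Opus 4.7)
The plan is to derive the equivalence from Theorem~\ref{cor1} combined with Grothendieck duality, by linking Ext-vanishing of $(M, N)$ to Tor-vanishing of $(M, N^*)$, where $X^* \coloneqq \RHom_R(X, R)$.

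First, for $M \in \D^f(R)$ consider the full subcategory
\[
\T(M) \coloneqq \{X \in \D^f(R) : \Tor^R_i(M, X) = 0 \text{ for } i \gg 0\}.
\]
A routine check using the long exact sequence of Tor shows $\T(M)$ is a thick subcategory of $\D^f(R)$. Condition (3) is the statement $N \in \T(M)$; since $\T(M)$ is closed under $\RHom_R(-, R)$ by Theorem~\ref{cor1}, we have $N \in \T(M)$ if and only if $N^* \in \T(M)$. That is, (3) is equivalent to $\Tor^R_i(M, N^*) = 0$ for $i \gg 0$.

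Second, reflexivity $X \simeq X^{**}$ holds for any $X \in \D^f(R)$: resolving $X$ by a bounded above complex of finitely generated projectives reduces this to the classical reflexivity of finitely generated projectives. Tensor-hom adjunction then gives
\[
\RHom_R(M, N) \simeq \RHom_R(M, N^{**}) \simeq \RHom_R(M \ot_R N^*, R) = (M \ot_R N^*)^*.
\]
Because $R$ is locally complete intersection and hence locally Gorenstein, $\RHom_R(-, R)$ preserves and reflects bounded cohomology on complexes with finitely generated cohomology (the forward direction uses finite local injective dimension of $R$, the reverse uses reflexivity applied to $M \ot_R N^*$). Hence condition (1), namely $\RHom_R(M, N) \in \D^b(R)$, is equivalent to $M \ot_R N^* \in \D^b(R)$, i.e., $\Tor^R_i(M, N^*) = 0$ for $i \gg 0$. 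Combined with the first step, this yields (1) $\Leftrightarrow$ (3).

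Finally, condition (3) is symmetric in $M$ and $N$ by symmetry of Tor, so running the above argument with the roles of $M$ and $N$ swapped produces (2) $\Leftrightarrow$ (3), and hence all three conditions are equivalent. The main obstacle is justifying the Grothendieck duality identification and the boundedness equivalence globally over a noetherian locally complete intersection ring; both rely on local Gorensteinness and standard dualizing-complex arguments, possibly via a reduction to the local setting at each prime ideal.
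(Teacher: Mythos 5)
Your argument is correct and is essentially the paper's proof in light disguise: reduce to (1) $\Leftrightarrow$ (3) via symmetry of Tor, use the adjunction $(M\ot_R N)^\vee\simeq\RHom_R(M,N^\vee)$, invoke Theorem \ref{cor1} to move between an object and its Grothendieck dual inside a thick subcategory, and close the loop with biduality. The paper applies Theorem \ref{cor1} to $\D^f(R)$ itself and to $\thick_{\D(R)}(N^\vee)\ni N$ rather than to your auxiliary subcategory $\T(M)$, but the content is identical.

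One caveat: your justification of reflexivity is not right as stated. If $P\to X$ is a (cohomologically) bounded above, possibly unbounded below, complex of finitely generated projectives, then $\Hom_R(P,R)$ is a complex of projectives that is unbounded in the semiprojective-failing direction, so $\Hom_R(\Hom_R(P,R),R)$ need not compute $X^{**}$; and indeed $X\simeq X^{**}$ genuinely fails for non-perfect $X$ over non-Gorenstein rings (e.g.\ $X=k$ over a non-Gorenstein local ring). The statement you need is true here precisely because a locally complete intersection ring is Gorenstein, which is the hypothesis the paper invokes for the map $M\to M^{\vee\vee}$; similarly, the cleanest way to see that $\RHom_R(-,R)$ preserves $\D^f(R)$ globally (including when $R$ has infinite Krull dimension) is to quote Theorem \ref{cor1} itself rather than appeal to finite local injective dimension.
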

\begin{proof} It suffices to show the equivalence of (1) and (3) since Tor is symmetric in $M$ and $N$. Furthermore, the equivalence of (1) and (3) is the same as  
\[
\RHom_R(M,N) \text{ is in }\D^f(R)\iff M\ot_RN \text{ is in }\D^f(R);
\]so we show the latter equivalence holds. For the rest of the proof let $(-)^\vee$ be $\RHom_R(-,R).$ Observe for any pair of objects $M$ and $N$ in $\D(R),$ we have the following adjunction isomorphism:
\begin{equation}
\label{eeiso}\tag{$\dagger$}
(M\ot_RN)^\vee\simeq \RHom_R(M,N^\vee).
\end{equation}

$\impliedby:$ As $M\ot_R N$ is in $\D^f(R)$ and Theorem \ref{cor1},  it follows that $(M\ot_RN)^\vee$ is in $\D^f(R).$ Hence, by (\ref{eeiso}),
$\RHom_R(M,N^\vee)$ is in $\D^f(R)$. By Theorem \ref{cor1}, $N$ is in $\thick_{\D(R)}(N^\vee)$, and so   $\RHom_R(M,N)$ is in $\D^f(R),$ as well.

$\implies:$  
By Theorem \ref{cor1}, our assumption is equivalent to $\RHom_R(M,N^\vee)$ is in $\D^f(R). $ By (\ref{eeiso})
and  Theorem \ref{cor1} we conclude that $(M\ot_RN)^{\vee\vee}$ is in $\D^f(R)$, as well. 
Finally, as $R$ is Gorenstein, 
the natural map \[
M\to M^{\vee\vee}
\] is an isomorphism in $\D^f(R)$. Therefore, $M\ot_R N$ is in $\D^f(R),$ as needed.
\end{proof}


\section{Duality of Thick Subcategories via Exterior Algebras}\label{a1}
 
For a finite dimensional Hopf algebra $H$ over a field $k$ and finite dimensional $H$-module $M$, it is well known that $M$ is a direct summand of $M\otimes_k \Hom_k(M,k)\otimes_k M$ (see, for example, \cite[3.1.10]{benson});  the  $H$-module structure on the latter needs both the antipode and co-multiplication of $H$. In this section we make use of the DG version of this fact for a graded exterior algebra.

Throughout $k$ will be a field and  $\Lambda$ is the exterior algebra on  generators $e_1,\ldots,e_c$ of homological degree $1$ over $k$. It is easily checked $\Lambda$ is a graded Hopf algebra with co-multiplication and antipode  determined by 
$$
\Delta(e_i)=e_i\otimes 1+1\otimes e_i \
\text{ and } \ 
 \sigma(a)=(-1)^{|a|}a,
 $$
 respectively (cf. \cite[5.3]{AI3}).

The main goal of the   section is  to prove the next theorem and as an application we give a second short proof of Theorem \ref{cor1}. The proof of Theorem \ref{closed} requires some additional setup  and can be found after \ref{split}.

\begin{theorem}\label{closed}
For an object  $M$ of $\D^f(\Lambda)$, 
 \[\thick_{\D(\Lambda)} (M)=\thick_{\D(\Lambda)} (\RHom_\Lambda(M,\Lambda)).\]  
\end{theorem}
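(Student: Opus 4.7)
The plan is to treat $\D(\Lambda)$ as a closed symmetric monoidal triangulated category under $\otimes_k$, where the $\Lambda$-action on tensor products comes from the co-multiplication $\Delta$ and the action on $k$-linear duals comes from the antipode $\sigma$. First I would identify $\RHom_\Lambda(M,\Lambda)$ concretely. Because $\Lambda$ is graded Frobenius with one-dimensional socle in homological degree $c$, there is an isomorphism of DG left $\Lambda$-modules $\Lambda\cong\shift^{c}\Hom_k(\Lambda,k)$. Combined with the adjunction $\RHom_\Lambda(M,\Hom_k(\Lambda,k))\simeq\Hom_k(M,k)$ (which holds because $\Hom_k(\Lambda,k)$ is injective as a $\Lambda$-module) this yields $\RHom_\Lambda(M,\Lambda)\simeq\shift^{c}M^{\vee}$, where $M^{\vee}:=\Hom_k(M,k)$ carries the $\Lambda$-action determined by $\sigma$. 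Thus it suffices to prove $\thick_{\D(\Lambda)}(M)=\thick_{\D(\Lambda)}(M^{\vee})$.

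Since $M$ has finite-dimensional total homology, it is perfect as a $k$-complex, and hence is dualizable in the closed symmetric monoidal category $(\D(\Lambda),\otimes_k,k)$ with dual precisely $M^{\vee}$. The snake identity for a rigid object in a symmetric monoidal category then exhibits $M$ as a direct summand of $M\otimes_k M^{\vee}\otimes_k M$ via the coevaluation $k\to M\otimes_k M^{\vee}$ and evaluation $M^{\vee}\otimes_k M\to k$, which are morphisms in $\D(\Lambda)$. It therefore remains to show $M\otimes_k M^{\vee}\otimes_k M$ belongs to $\thick_{\D(\Lambda)}(M^{\vee})$.

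The key technical step is the following lemma: for every $V$ in $\D^f(\Lambda)$ and $X$ in $\D(\Lambda)$, the object $V\otimes_k X$ lies in $\thick_{\D(\Lambda)}(X)$. To prove it, first observe that $V$ lies in $\thick_{\D(\Lambda)}(k)$: the only simple graded $\Lambda$-module is $k$ (up to shift), so the finite-dimensional graded module $H^{*}(V)$ admits a finite filtration with subquotients shifts of $k$, and $V$ itself is obtained from $H^{*}(V)$ by a finite Postnikov tower of triangles. Applying the triangulated functor $(-)\otimes_k X$, which sends $k$ to $X$ via the Hopf counit axiom $k\otimes_k X\simeq X$, yields $V\otimes_k X\in\thick_{\D(\Lambda)}(X)$. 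Now apply the lemma with $V=M\otimes_k M$, which lies in $\D^f(\Lambda)$ by the K\"unneth formula, and $X=M^{\vee}$: the symmetry of $\otimes_k$ gives $M\otimes_k M^{\vee}\otimes_k M\simeq(M\otimes_k M)\otimes_k M^{\vee}\in\thick_{\D(\Lambda)}(M^{\vee})$, whence $M\in\thick_{\D(\Lambda)}(M^{\vee})$. The reverse containment follows by exchanging the roles of $M$ and $M^{\vee}$ and invoking $(M^{\vee})^{\vee}\simeq M$. The main obstacle is the careful bookkeeping of Koszul signs arising from $\sigma$ and the braiding on $\otimes_k$: these may realize the constructions above only up to negating the differential, and Lemma \ref{l:neg} is precisely what is needed to absorb such sign twists into the thick subcategory.
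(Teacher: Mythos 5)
Your proposal is correct and follows essentially the same route as the paper's proof: identify $\RHom_\Lambda(M,\Lambda)$ with a shift of the $k$-linear dual, split $M$ off $M\otimes_k M^{\vee}\otimes_k M$ using the Hopf structure ($\Delta$ and $\sigma$), use $\thick_{\D(\Lambda)}(k)=\D^f(\Lambda)$ to place that triple tensor product in $\thick_{\D(\Lambda)}(M^{\vee})$, and invoke Lemma \ref{l:neg} to reconcile the $\sigma$-twisted dual with the $\Lambda$-linear dual up to negation of the differential. The only differences are presentational (abstract rigidity and the snake identity in place of the explicit map $\vp_{M,M}$ from \cite[4.8]{AI3}), and you correctly flag the one genuinely non-formal point, namely that the sign discrepancy is absorbed by Lemma \ref{l:neg} rather than by an isomorphism of DG modules.
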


\begin{chunk}
\label{dual}
Let $\rho\colon \Lambda\to \Lambda$ be an anti-endomorphism of graded $k$-algebras, 
which is nothing more than an endomorphism  since $\Lambda$ is graded-commutative.  The map $\rho$ prescribes a natural left DG $\Lambda$-module structure on the graded $k$-space $M^*=\Hom_k(M,k)$;  define $M^*(\rho)$ to be the left DG $\Lambda$-module whose underlying graded $k$-space is $M^*$ and its differential and $\Lambda$-action are  given by \begin{align*} \del^{M^*(\rho)}(f)&\coloneqq-(-1)^{|f|}f\del^M=(-1)^{|f|+1}f\del^M\\
a\cdot f&\coloneqq(-1)^{|a||f|}f(\rho(a)-).\end{align*} We are particularly interested in the relationship between $M^*(\id)$ and $M^*(\sigma)$. Furthermore, as $\Hom_k(\Lambda,k)\cong \shift^{-c} \Lambda$ as DG $\Lambda$-modules a direct calculation yields 
\begin{equation}\label{e:duals}
\RHom_{\Lambda}(M,\Lambda)\simeq \shift^c\Hom_k(M,k)=\shift^cM^*(\id).
\end{equation}
\end{chunk}

\begin{chunk}\label{twist}
Let $M$ be a DG $\Lambda$-module. Since $\Lambda$ has trivial differential we can twist the differential \text{and} $\Lambda$-action of $M$ to obtain an isomorphic DG $\Lambda$-module. We define $M_\tau$ to be the DG $\Lambda$-module whose underlying graded $k$-space is $M$ equipped with differential and $\Lambda$-action
\begin{align*}
    \del^{M_\tau}&\coloneqq -\del^M \\
    a\cdot m&\coloneqq (-1)^{|a|}am.
\end{align*}The map  $M\to M_\tau$ given by 
\[
m\mapsto (-1)^{|m|}m
\] is easily checked to be an isomorphism of DG $\Lambda$-modules.
\end{chunk}

\begin{proposition}\label{dualiso}
For any $M$ in  $\D(\Lambda)$, $M^*(\sigma)$ and $M^*(\id)_\tau$ have the same underlying graded $\Lambda$-module while their differentials are negatives of one another.
\end{proposition}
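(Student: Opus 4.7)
My plan is to prove the proposition by a direct term-by-term comparison of the two DG $\Lambda$-module structures. The essential observation is a sign cancellation: the extra factor $(-1)^{|a|}$ that distinguishes the antipode $\sigma(a)=(-1)^{|a|}a$ from $\id$ is precisely the factor that the twist construction $(-)_\tau$ in \ref{twist} inserts into the $\Lambda$-action. Once this is recognized, the proof is essentially bookkeeping of Koszul signs.

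First I would spell out the $\Lambda$-action on $M^{*}(\sigma)$ using the formula from \ref{dual}. For $a\in \Lambda$ and $f\in M^{*}$, the $k$-linearity of $f$ lets me pull $(-1)^{|a|}$ out of $\sigma(a)$, giving
\[
a\cdot f \;=\; (-1)^{|a||f|}\,f(\sigma(a)-) \;=\; (-1)^{|a||f|+|a|}\,f(a-).
\]
Next I would unfold $M^{*}(\id)_{\tau}$. The $\Lambda$-action on $M^{*}(\id)$ is $a\cdot f = (-1)^{|a||f|}f(a-)$, and the twist in \ref{twist} multiplies the action by $(-1)^{|a|}$, producing exactly the same formula. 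Hence $M^{*}(\sigma)$ and $M^{*}(\id)_{\tau}$ have identical underlying graded $\Lambda$-modules.

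For the differentials, I would note that the formula $\del^{M^{*}(\rho)}(f)=(-1)^{|f|+1}f\del^{M}$ in \ref{dual} does not depend on $\rho$; thus $M^{*}(\sigma)$ and $M^{*}(\id)$ have the same differential. The twist from \ref{twist} negates the differential, so $\del^{M^{*}(\id)_{\tau}}(f) = (-1)^{|f|}f\del^{M} = -\del^{M^{*}(\sigma)}(f)$, which is the required relation.

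The main (and only) obstacle is bookkeeping of Koszul signs; there is no conceptual difficulty beyond realizing that the antipode sign and the twist sign are each $(-1)^{|a|}$ and so align to make the two $\Lambda$-actions coincide while leaving the differentials opposite.
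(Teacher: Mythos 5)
Your computation is correct and is exactly the verification the paper leaves implicit (its proof simply states that the claim follows directly from the definitions in \ref{dual} and \ref{twist}). The key sign cancellation you identify --- that the antipode's factor $(-1)^{|a|}$ matches the twist's factor on the $\Lambda$-action, while the differentials end up opposite --- is precisely the intended bookkeeping.
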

\begin{proof}
This follows directly from the definitions in \ref{dual} and \ref{twist}.
%
\end{proof}

\begin{chunk}\label{structure}
Let $M$ and $ N$ be left DG $\Lambda$-modules, then $M\otimes_k N$ is a left DG $\Lambda\otimes_k \Lambda$-module.
We define $\Hom_k(M,N)$ to be a left DG $\Lambda\otimes_k\Lambda$-module 
\begin{align*}
\del^{\Hom_k(M,N)}(f)\coloneqq\del^Nf-(-1)^{|f|}f\del^M\\
(a_1\otimes a_2)\cdot f\coloneqq(-1)^{|a_2||f|}a_1 f(\sigma(a_2)-).
\end{align*}
Hence, both $M\otimes_k N$ and $\Hom_k(M,N)$ inherit a DG $\Lambda$-module structure via $\Delta$. 
There is a natural morphism of DG $\Lambda$-modules
\[
\vp_{M,N}\colon N\otimes_k M^*(\sigma)\to  \Hom_k(M,N)\]
which is an isomorphism in $\D(\Lambda)$ when $\h(M)$ is a finite rank $k$-space (see  \cite[4.8]{AI3}). 
\end{chunk}

\begin{chunk}\label{split}
Let $M$ be a DG $\Lambda$-module. Consider the morphism of DG $\Lambda$-modules
$$
\pi_M\colon k\rightarrow \Hom_k(M,M)
$$
mapping $1$ to $\id_M$. 
$$
M\cong k\otimes_k M\xrightarrow{\pi_M\otimes M} \Hom_k(M,M)\otimes_k M.
$$  
For an object $M$ in $\D^f(\Lambda)$ the composition  \[ M\cong k\otimes_k M\xrightarrow{\pi_M\otimes M} \Hom_k(M,M)\otimes_k M\xra{\vp_{M,M}^{-1}\otimes M } M\otimes_k M^*(\sigma)\otimes_k M\] splits in $\D^f(\Lambda)$; this is
essentially the same argument from  the classical case  (cf. \cite[3.1.10]{benson}). 

%
\end{chunk}

\begin{proof}[Proof of Theorem \ref{closed}]
As  $\thick_{\D(\Lambda)}(k)=\D^f(\Lambda)$,  it follows that  \[M\otimes_k M^*(\sigma)\otimes_k M\text{ is an object of } \thick_{\D(\Lambda)}(M^*(\sigma)).\] Hence, \ref{split} implies that  $M$ is in $\thick_{\D(\Lambda)}(M^*(\sigma))$, as well.
Since $(M^*(\sigma))^*(\sigma)\cong M$, then by symmetry we have  shown 
\begin{equation}
    \thick_{\D(\Lambda)}(M)=\thick_{\D(\Lambda)} \left(M^*(\sigma)\right).
    \label{e:thick}
\end{equation}

Now we only need to observe that 
\begin{align*}
    \thick_{\D(\Lambda)}(M)&=\thick_{\D(\Lambda)} (M^*(\sigma)) \\
    &= \thick_{\D(\Lambda)} (M^*(\sigma),-\del^{M^*(\sigma)}) \\
    &=\thick_{\D(\Lambda)}(M^*(\id)) \\
    &=\thick_{\D(\Lambda)}\left(\RHom_\Lambda(M,\Lambda)\right)
\end{align*}
where the first equality holds by (\ref{e:thick}), the second equality is Lemma \ref{l:neg}, the third equality 
is from  \ref{twist} and Proposition \ref{dualiso}, and the last equality holds by \ref{dual}(\ref{e:duals}). Therefore, we have justified the theorem. 
%
%
\end{proof}



As an application of the theory above (in particular, Theorem \ref{closed}) we now present a second proof of Theorem \ref{cor1}.

\begin{proof}[Second proof of Theorem \ref{cor1}]
First, if $R$ is complete intersection we show that for any $M$ in $\D^f(R)$, \begin{equation}\thick_{\D(R)}\left(M\otimes_R K^R\right)=\thick_{\D(R)}\left(\RHom_R(M,R)\otimes_R K^R\right).\label{eq}\end{equation}

Now observe that Theorem \ref{closed} shows 
\[
\thick_{\D(\Lambda)}\left(\mathsf{j}\mathsf{t} M\right)=\thick_{\D(\Lambda)}\left(\RHom_\Lambda(\mathsf{j}\mathsf{t} M,\Lambda)\right) 
\] where $\mathsf{j}$ and $\mathsf{t}$ are the  functors  introduced in \ref{koszulci}.
Therefore,
\[
\thick_{\D(K^R)} (\mathsf{t} M)=\thick_{\D(K^R)}\left(\RHom_{K^R}(\mathsf{t} M,K^R)\right)
\]
and since  \[
\RHom_{K^{R}}(\mathsf{t} M,K^{R})\simeq \mathsf{t} \RHom_R(M,R)
\]
it follows that
\begin{equation*}\label{eq1}
\thick_{\D(K^{R})}(\mathsf{t}M) =\thick_{\D(K^{R})}\left(\mathsf{t}\RHom_R(M,R)\right).
\end{equation*} So restricting scalars along $R\to K^R$ finishes the proof of (\ref{eq}) in the case that  $R$ is complete intersection.

Now we return to the general setting; namely, assume that $R$ is locally complete intersection. Let $\p$ be  a prime ideal of $R$, then by assumption $R_\p$ is complete intersection and 
\begin{align*}
    \thick_{\D(R_\p)}\left(M(\p)\right)&= \thick_{\D(R_\p)}\left(\RHom_{R_\p}(M_\p,R_\p)\otimes_{R_\p}K^{R_\p}\right)\\
    &=\thick_{\D(R_\p)}\left(\RHom_{R}(M,R)(\p)\right);
\end{align*}
the first equality is from the already established equality (\ref{eq}) and the second equality 
 is immediate from the isomorphism $$\RHom_{R_\p}(M_\p,R_\p)\otimes_{R_\p}K^{R_\p}\simeq \RHom_R(M,R)\otimes_R K^{R_\p}.$$ Since these equalities of thick subcategories hold for each prime ideal $\p$ of $R$, an application of the local-to-global principle in \ref{lg}   establishes the desired result. 
\end{proof}


\section{Symmetry of Complexity}\label{s:complexity}

 In this section, we offer two proofs of Theorem \ref{symcx}; both methods indicate how the symmetry of complexity over  complete intersections follow from studying properties of exterior algebras. Theorem \ref{symcx} was originally shown in  \cite[5.7]{AB} using support varieties and the use of intermediate hypersurfaces. A second proof was given by the second author in \cite[4.3.1]{Po} by studying the cohomological support of certain DG modules over a  graded commutative ring of finite global dimension. 

Throughout this section  $(R,\m,k)$ is a commutative noetherian local ring. Also, we let $(-)^\vee$ denote the functor $\RHom_R(-,R).$
\begin{chunk}\label{cx}Let $M$ and  $N$ be  in $\D^f(R)$. The \emph{complexity of the  pair $(M,N)$},  denoted $\cx_R(M,N)$, is the least non-negative integer $d\in \mathbb N$ such that \[\dim_k(\Ext^n_R(M,N)\otimes_R k)\leq an^{d-1}\] for all $n\gg 0$ and some $a\in\mathbb R$. That is, $\cx_R(M,N)$ measures the polynomial rate of growth of the minimal number of generators of $\Ext_R^n(M,N)$.
\end{chunk}

\begin{theorem}\label{symcx}
Let $R$ be  complete intersection. For  each pair of objects $M$ and $N$ in $\D^f(R)$,  $\cx_R(M,N)=\cx_R(N,M).$
\end{theorem}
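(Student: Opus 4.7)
The plan is to derive Theorem \ref{symcx} as a consequence of Theorem \ref{cor1} together with a Hom-swap identity available in $\D^f(R)$. Throughout, write $(-)^\vee = \RHom_R(-,R)$.

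Since any complete intersection local ring is Gorenstein of finite Krull dimension, $(-)^\vee$ preserves $\D^f(R)$ and the biduality morphism $X \to X^{\vee\vee}$ is a natural isomorphism for every $X \in \D^f(R)$. Combining this with tensor-Hom adjunction yields the natural isomorphism
\[
\RHom_R(M,N) \simeq \RHom_R(M, N^{\vee\vee}) \simeq (M \ot_R N^\vee)^\vee \simeq \RHom_R(N^\vee, M^\vee)
\]
in $\D(R)$. Applying $\dim_k(- \otimes_R k)$ in each cohomological degree and comparing polynomial growth rates, this identity gives
\[
\cx_R(M,N) = \cx_R(N^\vee, M^\vee).
\]

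Next I would record the following monotonicity of complexity with respect to thick subcategories in each variable: if $X$ is in $\thick_{\D(R)}(Y)$, then $\cx_R(X,B) \leq \cx_R(Y,B)$ and $\cx_R(B,X) \leq \cx_R(B,Y)$ for every $B \in \D^f(R)$. This is immediate from the definition in \ref{cx} since $\cx_R$ is sub-additive on exact triangles (via the long exact sequence of $\Ext$) and invariant under passage to direct summands. Now Theorem \ref{cor1} yields the equalities $\thick_{\D(R)}(M) = \thick_{\D(R)}(M^\vee)$ and $\thick_{\D(R)}(N) = \thick_{\D(R)}(N^\vee)$, so applying monotonicity successively in each slot gives
\[
\cx_R(N^\vee, M^\vee) = \cx_R(N, M^\vee) = \cx_R(N, M),
\]
and chaining with the previous display finishes the argument.

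The only point that needs to be pinned down is the monotonicity of $\cx_R$ under thick subcategory containment, and this is routine given the definition. All substantive content is packaged into Theorem \ref{cor1} and Gorenstein biduality, so there is no essential obstacle remaining; the hardest step (the self-duality of thick subcategories) has already been carried out. A natural variant of the plan would bypass Theorem \ref{cor1} and instead transport the problem along $\mathsf{jt}$ to the exterior algebra $\Lambda$, where the graded-Hopf structure yields symmetry of complexity directly; this is presumably the second proof alluded to in the introduction.
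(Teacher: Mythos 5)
Your argument is essentially the paper's first proof of this theorem: the Hom-swap $\RHom_R(M,N)\simeq\RHom_R(N^\vee,M^\vee)$ via Gorenstein biduality and adjunction, followed by Theorem \ref{cor1} and monotonicity of complexity along containment of thick subcategories. The one place you are too quick is the claim that this monotonicity is ``immediate from the definition'' via sub-additivity on exact triangles. The long exact sequence presents $\Ext^{n}_R(C,N)$ (for a triangle $A\to B\to C\to\shift A$) as an extension of a \emph{submodule} of $\Ext^{n}_R(B,N)$ by a quotient of $\Ext^{n-1}_R(A,N)$, and the invariant $\dim_k(-\otimes_Rk)$, i.e.\ the minimal number of generators, is not monotone under passage to submodules over a local ring. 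So the naive LES argument does not close, and in fact this monotonicity is exactly where the complete intersection hypothesis enters: the standard proofs (Avramov--Buchweitz \cite[5.6]{AB}, or \cite[4.2.5 \& 4.2.9]{Po}) identify $\cx_R(M,N)$ with the dimension of the support of $\Ext_R(M,N)$ as a finitely generated module over the ring of cohomology operators $\S$ (Gulliksen's finiteness theorem), and dimension of support \emph{is} sub-additive on triangles and stable under summands. The paper simply cites this as a known input (Remark \ref{r1}(2)); you should do the same rather than assert it is formal. With that citation in place your proof is complete and coincides with the paper's first proof; your closing remark about transporting the problem along $\mathsf{jt}$ to $\Lambda$ is indeed the paper's second proof.
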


\begin{remark}\label{r1}
The first proof of Theorem \ref{symcx} is a straightforward application  of Theorem \ref{cor1} and two standard facts: 
\begin{enumerate}
    \item when $R$ is Gorenstein there is a natural isomorphism of graded $R$-modules
    \[\Ext_R(M,N)\cong \Ext_R(N^\vee,M^\vee)\] for each $M$ and $N$ in $\D^f(R)$; 
    \item  when $R$ is complete intersection \[\cx_R(X,Y)\leq \cx_R(M,N)\] whenever $X$ is in $\thick_{\D(R)}(M)$ and $Y$ is in $\thick_{\D(R)}(N)$ (see \cite[5.6]{AB} or \cite[4.2.5 \& 4.2.9]{Po}). 
\end{enumerate}
\end{remark}

\begin{proof}[First proof of Theorem \ref{symcx}]
From Remark \ref{r1}(1) it follows easily that  
$$
\cx_R(M,N)=\cx_R(N^\vee,M^\vee).
$$
Using Theorem \ref{cor1}, we obtain $M^\vee$ is an object of $\thick_{\D(R)}(M)$ and $N^\vee$ is an object of   $\thick_{\D(R)}(N)$. So  Remark \ref{r1}(2) establishes the inequality 
$$
\cx_R(M,N)=\cx_R(N^\vee,M^\vee)\leq \cx_R(N,M)
.$$
By symmetry the result follows.
\end{proof}

\begin{remark}\label{rcx}
As discussed in the introduction, the theme of \cite{AI3} is to deduce homological results over $R$, when  $R$ is complete intersection, using the bridge 
\[\mathsf{jt}\colon\D^f(R)\to\D^f(K^R)\xra{\equiv} \D^f(\Lambda)\]from \ref{koszulci}. However, in  \cite[8.9]{AI3} it is remarked that the authors did not see how to deduce Theorem \ref{symcx} from studying this bridge. The second proof of Theorem \ref{symcx}, given below, shows that  one can in fact arrive at the symmetry of complexity over the complete intersection $R$ as a direct consequence of the symmetry of complexity over  $\Lambda.$ 
\end{remark}

\begin{proof}[Second proof of Theorem \ref{symcx}]
Assume $R$ is complete intersection. 

Observe that 
\[
\Ext_{R}(M,\mathsf{t}N)\cong \Ext_{K^R}(\mathsf{t} M, \mathsf{t} N )
\cong \Ext_\Lambda(\mathsf{jt}M, \mathsf{jt}N)
\] where the first isomorphism is adjunction and the second one uses that  $\mathsf{j}$ is an equivalence. Also, by \cite[4.2.7]{Pol}
\[\cx_R(M,N)=\cx_R(M,\mathsf{t}N),\]
and so combining  this with the isomorphisms above we have \begin{equation}\label{e5}\cx_R(M,N)=\cx_\Lambda(\mathsf{jt}M, \mathsf{jt}N).\end{equation} 
Similarily, it follows that 
\begin{equation}\label{e8}\cx_R(N,M)=\cx_\Lambda(\mathsf{jt}N, \mathsf{jt}M).\end{equation}
Note \cite[5.3]{AI3} established 
\[
\cx_\Lambda(\mathsf{jt} M, \mathsf{jt}N)=\cx_\Lambda(\mathsf{jt}N, \mathsf{jt}M);
\]  this equality along with the ones in  (\ref{e5}) and (\ref{e8})  establish $\cx_R(M,N)=\cx_R(N,M)$, as claimed. 
\end{proof}


\bibliographystyle{amsplain}
\bibliography{ref}

\end{document}